\theoremstyle{plain}
\newtheorem{lem}{Lemma}[section]
\newtheorem{prop}[lem]{Proposition}
\newtheorem{thm}[lem]{Theorem}
\theoremstyle{definition}
\theoremstyle{remark}
\DeclareMathOperator{\cls}{cls}
\DeclareMathOperator{\disc}{disc}
\DeclareMathOperator{\sym}{sym}
\DeclareMathOperator{\e}{e}
\DeclareMathOperator{\diag}{diag}
\newcommand{\calS}{{\mathcal S}}
\newcommand{\Z}{\mathbb Z}
\newcommand{\Q}{\mathbb Q}
\newcommand{\R}{\mathbb R}
\newcommand{\C}{\mathbb C}
\newcommand{\stufe}{\mathcal N }
\newcommand{\h}{\mathfrak h}
\begin{document}

% Enter full title and short title for running headers
\title[Some relations on Fourier coefficients]
{Some relations on Fourier coefficients of degree 2 Siegel forms of arbitrary level}

% Author name(s)
\author{Lynne H. Walling}
\address{School of Mathematics, University of Bristol, University Walk, Clifton, Bristol BS8 1TW, United Kingdom;
phone +44 (0)117 331-5245, fax +44 (0)117 928-7978}
\email{l.walling@bristol.ac.uk}
% Abbreviated author name for running headers
%\abbrevauthor{L.H. Walling}
% Abbreviated author name for first page header
%\headabbrevauthor{Walling, L.H.}

%\address{School of Mathematics, University of Bristol, University Walk, Clifton, Bristol BS8 1TW, United Kingdom}

% Address / e-mail address of corresponding author
%\correspdetails{l.walling@bristol.ac.uk}

\keywords{Hecke eigenvalues, Siegel modular forms}

\begin{abstract} We extend some recent work of D. McCarthy, proving relations among some Fourier coefficients of a degree 2
Siegel modular form $F$ with  arbitrary level and character, provided there are some primes $p$ so that $F$ is an eigenform for the Hecke operators $T(p)$ and $T_1(p^2)$.
\end{abstract}

\maketitle
\def\thefootnote{}
\footnote{2010 {\it Mathematics Subject Classification}: Primary
11F46, 11F11 }
\def\thefootnote{\arabic{footnote}}

\section{Introduction} 
In a recent paper \cite{M}, McCarthy derives some nice results for Fourier coefficients and Hecke eigenvalues of degree 2 Siegel modular forms of level 1, extending some classical results regarding elliptic modular forms.  In particular,  with $F$ a degree 2, level 1 Siegel modular form that is an eigenform for all the Hecke operators $T(p)$, $T(p^2)$ ($p$ prime), and $a(T)$ denoting the $T$th Fourier coefficient of $F$, 
McCarthy shows that:
\begin{enumerate}
\item[(a)] 
provided that $a(I)=1$ and $p$ is prime, the $T(p)$-eigenvalue $\lambda(p)$ and the $T(p^2)$-eigenvalue $\lambda(p^2)$ are described explicitly in terms of $a(pI)$ and $a(p^2I)$;
\item[(b)] for $r\ge1$, $a(I)a(p^{r+1}I)$ is described explicitly in terms of $a(I)$, $a(pI)$, $a(p^{r-1}I)$, 
$a\begin{pmatrix}p^{r-1}\\&p^{r+1}\end{pmatrix},$
and 
$a\left(p^r\begin{pmatrix}(1+u^2)/p&u\\u&p\end{pmatrix}\right)$
where $1\le u<p/2$ with $u^2\not\equiv1\ (p)$;
\item[(c)]  if $a(I)=0$ then $a(mI)=0$ for all $m\in\Z_+$;
further, if $m,n\in\Z_+$ with $(m,n)=1$, then $a(I)a(mnI)=a(mI)a(nI).$
\end{enumerate}
(As defined in Sec. 2, 
$T_2(p^2)$ is the Hecke operator associated with the matrix $\diag(p,p,1/p,1/p)$,
 $T_1(p^2)$ is the Hecke operator associated
 with the matrix $\diag(p,1,1/p,1)$, and $T(p^2)=T_2(p^2)+p^{k-3}T_1(p^2)+p^{2k-6}$.  In \cite{HW}, for $\chi=1$,$T(p^2)$ is denoted by $\widetilde T_2(p^2)$.)
McCarthy's approach begins with some formulas from \cite{And}, which are somewhat cumbersome.

In this note we use the formulas from \cite{HW} that give the action of Hecke operators on Fourier coefficients of a Siegel modular form $F$, allowing for arbitrary level and character, and giving a simpler proof of McCarthy's above results (with no restriction on the level or character). 
Here when we say that a modular form has weight $k$, level $\stufe$ and character $\chi$, we mean that it transforms with weight $k$ and character $\chi$ under the congruence subgroup 
$$\Gamma_0(\stufe)=\left\{\begin{pmatrix}A&B\\C&D\end{pmatrix}\in Sp_2(\Z):\ \stufe|C\ \right\},$$
where $Sp_2(\Z)$ is the symplectic group of $4\times 4$ integral matrices.
We work with ``Fourier coefficients" attached to lattices (as explained below), making it simpler to work with the image of $F$ under a Hecke operator.
% We also relax our conditions on $F$, supposing it is an eigenform for at least some of % the local Hecke algebras.
For $p$ prime and degree 2, the local Hecke algebra is generated by $T(p)$, $T_1(p^2)$ and $T_2(p^2)$.
When $\stufe=1$, Proposition 5.1 of \cite{HW} gives a relation between these generators, from which we deduce that with  $p\nmid\stufe$, $T(p)$ and $T_1(p^2)$ generate the local Hecke algebra, as do $T(p)$ and $\widetilde T_2(p^2)$. 
However, when $p|\stufe$, we have $T_2(p^2)=(T(p))^2$.
Hence in this note we use the local generators $T(p)$ and $T_1(p^2)$;
to more easily apply the results of \cite{HW}, 
we use the operator 
$$\widetilde T_1(p^2)=T_1(p^2)+\chi(p)p^{k-3}(p+1)$$
 in place of $T_1(p^2)$.
% (See Sec. 2 for definitions of these operators.)

Using some rather special aspects of working with degree 2 Siegel modular forms, we 
prove the following extensions of \cite{M}.

\begin{thm}  Suppose that $F$ is a degree 2 Siegel modular form of weight $k\in\Z_+$,  level $\stufe$ and character $\chi$ with Fourier expansion
$$F(\tau)=\sum_T a(T) \exp(2\pi iTr(T\tau)).$$
Also suppose that $p$ is prime with $F|T(p)=\lambda(p)F$ and
$F|\widetilde T_1(p^2)=\widetilde\lambda_1(p^2)F$.

\begin{enumerate}

\item[(a)]  
We have
$$
\lambda(p)a(mI)
=\chi(p)p^{k-2}\eta(p)a(mI)+a(mpI),
$$
where 
$$\eta(p)=
\begin{cases}
1+\chi(-1)(-1)^k&\text{if $p\equiv 1\ (4)$,}\\
0&\text{if $p\equiv 3\ (4)$,}\\
1&\text{if $p=2.$}
\end{cases}$$
(Thus when $a(mI)\not=0$, $\lambda(p)$ is given explicitly in terms of $p$, $a(mI)$ and $a(pmI)$.)
As well, we have
\begin{align*}
\chi(p)p^{k-2}\widetilde\lambda_1(p^2)a(mI)
&= \chi(p^2)p^{2k-4}(\alpha(I;p)-p)a(mI)\\
&\quad + \lambda(p)a(pmI)-a(p^2mI)
\end{align*}
where 
$$\alpha(I;p)=\begin{cases}
2&\text{if $p\equiv 1\ (4)$,}\\
0&\text{if  $p\equiv 3\ (4)$,}\\
1&\text{if  $p=2$.}
\end{cases}$$ 
(Thus when $\chi(p)a(mI)\not=0$,  $\widetilde\lambda_1(p^2)$ is given explicitly in terms of $p$, $a(mI)$, $a(pmI)$ and $a(p^2mI)$.)

\item[(b)]  Set $\epsilon=1+\chi(-1)(-1)^k$.
For $r\ge1$, $a(mI)a(p^{r+1}I)$ is given by
\begin{align*}
&a(pmI)a(p^rI)-\chi(p^2)p^{2k-3}a(mI)a(p^{r-1}I)\\
&\quad+
\epsilon\chi(p)p^{k-2}a(mI) a\begin{pmatrix}p^{r-1}m\\&p^{r+1}m\end{pmatrix}\\
&\quad +
\epsilon\chi(p)p^{k-2}a(mI)
\sum_{\substack{1\le u<p/2\\ u^2\not\equiv-1\,(p)}}
a\left(p^rm\begin{pmatrix}(1+u^2)/p&u\\u&p\end{pmatrix}\right).
\end{align*}

\item[(c)]  Suppose that 
$n$ a product of powers of primes 
$p$ so that
$F$ is an eigenform for $T(p)$ and $\widetilde T_1(p^2)$, and that $m\in\Z_+$ with
$(m,n)=1$.
 If $a(mI)=0$ then
$a(mnI)=0$.
Also, we have
$a(I)a(mnI)=a(mI)a(nI).$

\end{enumerate}

\end{thm}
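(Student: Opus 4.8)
The plan is to reduce part (c) to a single structural fact: for each prime $p$ dividing $n$, ``multiplying the index by $p$'' acts on the diagonal coefficients $a(\cdot\, I)$ as a scalar that depends only on $p$ and the eigenvalues $\lambda(p),\widetilde\lambda_1(p^2)$, and not on the remaining part of the index. First I would read off from the first displayed identity in part (a) that, whenever $p\nmid m$,
$$a(pmI)=c(p)\,a(mI),\qquad c(p)=\lambda(p)-\chi(p)p^{k-2}\eta(p),$$
which handles a single factor of $p$. For higher powers I would solve the second identity in part (a) for $a(p^2m'I)$, obtaining the three-term recursion
$$a(p^2m'I)=\lambda(p)a(pm'I)+\bigl(\chi(p^2)p^{2k-4}(\alpha(I;p)-p)-\chi(p)p^{k-2}\widetilde\lambda_1(p^2)\bigr)a(m'I),$$
valid for every $m'$. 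Substituting $m'=p^{r-2}m$ and inducting on $r$ then shows $a(p^rmI)=\gamma_p(r)\,a(mI)$ for all $r\ge0$, where $\gamma_p(r)$ is determined by the scalar recursion $\gamma_p(r)=\lambda(p)\gamma_p(r-1)+\bigl(\chi(p^2)p^{2k-4}(\alpha(I;p)-p)-\chi(p)p^{k-2}\widetilde\lambda_1(p^2)\bigr)\gamma_p(r-2)$ with $\gamma_p(0)=1$ and $\gamma_p(1)=c(p)$. The essential point is that $\gamma_p(r)$ carries no dependence on $m$.

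Next I would iterate over the factorization $n=\prod_i p_i^{r_i}$. Writing $n=p_1^{r_1}n'$ with $n'=\prod_{i\ge2}p_i^{r_i}$ and using $(m,n)=1$, so that $p_1\nmid mn'$, the base case applies and I can strip off the $p_1$-part as $a(mnI)=\gamma_{p_1}(r_1)\,a(mn'I)$. Repeating for each remaining prime yields
$$a(mnI)=\Bigl(\prod_i\gamma_{p_i}(r_i)\Bigr)a(mI)=:\gamma(n)\,a(mI),$$
with the multiplier $\gamma(n)$ depending only on $n$ and the eigenvalues. Taking $m=1$ (legitimate since $(1,n)=1$) gives $a(nI)=\gamma(n)\,a(I)$. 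Both assertions of part (c) then follow at once: if $a(mI)=0$ then $a(mnI)=\gamma(n)\cdot0=0$; and $a(I)a(mnI)=\gamma(n)a(I)a(mI)=a(mI)\cdot\gamma(n)a(I)=a(mI)a(nI)$.

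The hard part will be establishing that the per-prime multiplier $\gamma_p(r)$ is genuinely independent of $m$. This forces me to run the three-term recursion on indices $m'=p^{r-2}m$ that are themselves divisible by $p$, so I must know that the second identity in part (a) holds for all $m$, not merely for $m$ prime to $p$; granting this, the induction closes and the multiplier factors through the purely scalar recursion above. The coprimality hypothesis $(m,n)=1$ plays its only role here: it guarantees that as each successive prime $p_i\mid n$ is introduced, the current index $m\prod_{j>i}p_j^{r_j}$ is prime to $p_i$, so the first identity in part (a) supplies the base case $a(p_i M I)=c(p_i)a(MI)$. Beyond part (a), no further input is needed; the entire content of part (c) is the $m$-independence of the multiplier together with the bookkeeping of the prime factorization.
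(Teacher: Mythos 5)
Your overall architecture for (c) --- show that $a(p^r m I)=\gamma_p(r)\,a(mI)$ with a multiplier depending only on $p$, $r$ and the eigenvalues, then strip off the primes of $n$ one at a time --- is exactly the paper's (Proposition 3.2 produces the multiplier, there called $\kappa(p^r)$, and the final bookkeeping over the factorization of $n$ is the same). But there is a genuine gap at precisely the point you flag as the crux: the second identity in part (a) does \emph{not} hold for all $m'$; it is valid only when $p\nmid m'$. Both identities in (a) come from Theorem 2.1 applied to $\Lambda=\Delta^{m'}$ with $\Delta\simeq 2I$, and two things change once $p\mid m'$: the invariant $\alpha(\Delta^{m'};p)$ jumps from $\alpha(I;p)$ to $p+1$ (every line of $\Delta^{m'}/p\Delta^{m'}$ becomes isotropic), and the superlattice sum $\sum_{\{\Delta^{m'}:\Omega\}=(1/p,1)}c(\Omega)$ --- which vanishes for $p\nmid m'$ because no such $\Omega$ is even integral --- no longer vanishes. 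Consequently your scalar three-term recursion for $\gamma_p(r)$ is already wrong at $r=3$: the true relation acquires the extra terms $\chi(p^3)p^{3k-5}\eta(p)+\chi(p^2)p^{2k-4}\bigl(p+1-\alpha(I;p)\bigr)\kappa(p)$, which do not cancel in general, so the multiplier you compute is not $a(p^3mI)/a(mI)$.

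The repair is the content of the paper's Proposition 3.2: instead of a single scalar recursion one must track a coupled pair, namely $\kappa(p^r)$ with $c(\Delta^{p^r m})=\kappa(p^r)c(\Delta^m)$ and the auxiliary quantity $\eta(p^r)$ with $\sum_{\{\Delta:\Omega\}=(1,p)}c(\Omega^{p^{r-2}m})=\eta(p^r)c(\Delta^m)$, applying Theorem 2.1 directly at each stage and using $\alpha(\Delta^{p^j};p)=p+1$ for $j\ge1$. The resulting $\kappa(p^r)$ is still independent of $m$ (for $p\nmid m$), so your concluding step --- multiplying the per-prime multipliers over the primes of $n$ and deducing both assertions of (c) --- goes through verbatim once the multiplier is computed from the correct coupled recursion.
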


We also prove the following modest generalization.

\begin{thm}  Suppose that $F$ is a degree 2 Siegel modular form of
weight $k\in\Z_+$, level $\stufe$ and character $\chi$ with Fourier expansion
$$F(\tau)=\sum_T a(T) \exp(2\pi iTr(T\tau)).$$
Suppose that $p$ is an odd prime, and set $D=\begin{pmatrix}1\\&p\end{pmatrix}.$
Let $\calS$ be the set of odd primes so that for $q\in\calS$, 
$F$ is an eigenform for $T(q)$ and $\widetilde T_1(q^2)$, and either $q=p$ or
$\left(\frac{-p}{q}\right)=-1$.
Let $n$ be a product of powers of primes in $\calS$.
Then for any $m\in\Z_+$ so that $(m,n)=1$, we have
$$a(D)a(mnD)=a(mD)a(nD).$$
Also,
$a(D)a(mnD)=0$ if $a(mD)=0$.
\end{thm}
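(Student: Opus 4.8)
The plan is to mirror the proof of Theorem 1.1(c), replacing the scalar index $mI$ throughout by $mD$. The engine is a single-step recursion expressing $a(mqD)$ as a fixed scalar multiple of $a(mD)$ for every prime $q\in\calS$, with the scalar $c(q)$ depending only on $\lambda(q)$, $\widetilde\lambda_1(q^2)$, $\chi(q)$, $q$ and $k$, and not on the index $m$. Granting this, multiplicativity is immediate: writing $n=\prod_i q_i^{e_i}$ with each $q_i\in\calS$ and $(m,n)=1$, one iterates the relation --- the vanishing of cross terms used below depends only on the anisotropy of $x^2+py^2$ at $q_i$ and so is unaffected by multiplying the index by a power of $q_i$ --- to get $a(mq_i^{e}D)=c(q_i)^{e}a(mD)$. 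Hence $a(mnD)=\big(\prod_i c(q_i)^{e_i}\big)a(mD)$ and $a(nD)=\big(\prod_i c(q_i)^{e_i}\big)a(D)$; comparing the two gives $a(D)a(mnD)=a(mD)a(nD)$, and setting $a(mD)=0$ gives $a(D)a(mnD)=0$.

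To produce the recursion I would feed the index $mD$ into the \cite{HW} formulas for the action of $T(q)$ (and, where the content of the index is divisible by $q$, of $\widetilde T_1(q^2)$) on Fourier coefficients, exactly as in the derivation of Theorem 1.1(a). This writes $a(mqD)$ as a linear combination of a diagonal term in $a(mD)$, a term involving a lower $q$-power of the index (present only when $q$ divides the content), and a collection of cross terms: Fourier coefficients $a(T')$ with $T'$ ranging over the $q$-neighbors of $qD$ that do not lie in the $GL_2(\Z)$-class of a power of $q$ times $D$. Unlike the scalar situation, where the highly symmetric $I$ makes the single-step relation clean at once, here these cross terms are genuinely present whenever $x^2+py^2$ splits at $q$.

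The crux is to show that every cross term vanishes under the hypothesis defining $\calS$. The cross-term indices are parametrised by the isotropic directions modulo $q$ of the binary form attached to $D$, namely $x^2+py^2$ --- equivalently by the solutions of $u^2\equiv-p\ (q)$, the factor $m$ being prime to $q$ and playing no role. When $\left(\frac{-p}{q}\right)=-1$ the form is anisotropic at $q$, there are no such $u$, the cross-term sum is empty, and one is left with the clean relation $a(mqD)=c(q)a(mD)$; this is exactly why $\calS$ must exclude the split primes. The ramified prime $q=p$ (where $-p\equiv0$) has to be treated separately: here I would check directly from the \cite{HW} formulas that the single degenerate isotropic direction contributes only a further multiple of $a(mD)$ rather than a coefficient of a new class. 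I expect this cross-term analysis --- matching the neighbor sum produced by the \cite{HW} formulas to the Legendre-symbol condition and disposing of the ramified case --- to be the main obstacle; once it is settled, the bookkeeping and the inductive passage to general $n$ are routine and parallel both \cite{M} and Theorem 1.1(c).
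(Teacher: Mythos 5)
Your analysis of the first application of $T(q)$ is exactly right and reproduces the paper's key Lemma 4.1: the cross terms for the index $mD$ are parametrised by the solutions of $u^2\equiv -p\ (q)$, they are absent when $\left(\frac{-p}{q}\right)=-1$, and at the ramified prime $q=p$ the one surviving neighbour $\Z px\oplus\Z y\simeq 2mp\begin{pmatrix}p\\&1\end{pmatrix}$ is $GL_2(\Z)$-equivalent to $2mpD$ and so contributes only $\chi(-1)(-1)^k\,a(mD)$. The gap is in the passage to higher prime powers. You assert that ``the vanishing of cross terms\dots is unaffected by multiplying the index by a power of $q_i$,'' and conclude $a(mq_i^eD)=c(q_i)^e a(mD)$. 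This is false for $e\ge 2$: once the index is $mq^jD$ with $j\ge1$, the form $2mq^jD$ reduces to zero mod $q$, every one of the $q+1$ lines is isotropic, and \emph{all} $q+1$ neighbour coefficients $c(\Omega^{1/q})$ appear in the $T(q)$ relation; these are coefficients of classes such as $2mq^{j-1}\begin{pmatrix}1+pu^2&puq\\puq&pq^2\end{pmatrix}$, which are not multiples of $a(q^\ell D)$ for any $\ell$. The anisotropy of $x^2+py^2$ at $q$ controls integrality of $\Omega^{1/q}$ only when $q$ does not divide the content, so the mechanism that kills the cross terms at the first step is precisely the one that disappears at every subsequent step.

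This is why the paper does not iterate a first-order relation but instead mimics Proposition 3.2: it introduces the auxiliary quantities $\eta(q^r)$ recording the full neighbour sums $\sum_{\{\Delta:\Omega\}=(1,q)}c(\Omega^{q^{r-2}m})$, and uses the $\widetilde T_1(q^2)$ eigenvalue equation (whose $(1/q,1)$-sum is the $(1,q)$-sum two levels down) to close a \emph{second-order} recursion, yielding $c(\Delta^{q^rm})=\kappa(q^r)c(\Delta^m)$ with $\kappa(q^r)$ independent of $m$ but in general not equal to $\kappa(q)^r$. You do mention invoking $\widetilde T_1(q^2)$ when $q$ divides the content, but you never use it to absorb the non-vanishing neighbour sums, and your stated justification contradicts their presence. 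The good news is that your final bookkeeping only needs $m$-independence of the constants, not their multiplicativity in $e$, so once you replace the claimed first-order iteration by the two-term recursion of Proposition 3.2 (with base cases supplied by your Legendre-symbol analysis, i.e.\ Lemma 4.1), the rest of your argument goes through and coincides with the paper's proof.
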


We note that McCarthy applies his results to compute eigenvalues of the level 1 Eisenstein series with regard to the Hecke operators $T(p^r)$ ($p$ prime); as he notes, in \cite{W} we computed the Hecke-eigenvalues of Eisenstein series of square-free levels for all primes $p$, allowing nontrivial character (then generalized in \cite{W2} for arbitrary level $\stufe$ and character $\chi$, but only for primes $p$ so that $p^2\nmid\stufe$).

We further note that it seems that these results cannot be extended to higher degrees, as Lemma 3.1 (which is pivotal for our arguments) does not extend to higher degrees.

\bigskip

\section{Preliminaries}
We will use some language and notation commonly used in quadratic forms  and modular forms theory.  When $\Lambda$ is a lattice whose quadratic form is given by the matrix $T$ (relative to some $\Z$-basis for $\Lambda$), we write $\Lambda\simeq T$.  Now suppose that $\Lambda$ is a lattice with $\Lambda\simeq T$ and that $m\in\Q_+$; we write $\Lambda^m$ to denote the lattice $\Lambda$ ``scaled" by $m$, meaning that $\Lambda^m\simeq mT$.  Also, the discriminant of $\Lambda$ is $\det T$.  With $\Lambda, \Omega$ lattices on the same unlerlying quadratic space over $\Q$, we write $\{\Lambda:\Omega\}$ to denote the invariant factors of $\Omega$ in $\Lambda$.

We set
$$\h_{(2)}=\{X+iY:\ X,Y\in\R^{2,2}_{\sym}:\ Y>0\ \},$$
where $\R^{2,2}_{\sym}$ denotes the set of $2\times 2$ symmetric matrices with real entries, and $Y>0$ means that $Y$ represents a positive definite quadatic form.
For a ring $R$, we write $Sp_2(R)$ for the group of 
$4\times 4$ symplectic matrices with entries in $R$.
Fixing a weight $k\in\Z_+$, for
$\gamma=\begin{pmatrix}A&B\\C&D\end{pmatrix}\in Sp_2(\Q)$, we define
$$F(\tau)|\gamma
=(\det \gamma)^{k/2}\det(C\tau+D)^{-k}F((A\tau+B)(C\tau+D)^{-1}).$$
When  $F$ is a degree 2 Siegel modular form of weight $k$, level $\stufe$ and character $\chi$, this means that for
$\gamma=\begin{pmatrix}A&B\\C&D\end{pmatrix}\in\Gamma_0(\stufe)$, we have
\begin{align*}
F(\tau)|\gamma
=\chi(\det D_\gamma)F(\tau).
\end{align*}
We can write $F$ as a Fourier series:
$$F(\tau)=\sum_{T\ge 0} a(T)\exp(2\pi iTr(T\tau))$$
where the sum is over $2\times 2$ symmetric, positive semi-definite, half-integral matrices $T$ (so the entries in $T$ are half-integers with integers on the diagonal).  
Given $G\in GL_2(\Z)$, we have $\gamma=\begin{pmatrix}G^{-1}\\&^tG\end{pmatrix}\in \Gamma_0(\stufe)$.  Hence
\begin{align*}
\chi(\det G) F(\tau)&=F(\tau)|\gamma\\
&=(\det G)^k F(G^{-1}\tau\,^tG^{-1})\\
&=(\det G)^k \sum_T a(\,^tGTG) \exp(2\pi i Tr(T\tau)).
\end{align*}
Thus $a(^tGTG)=\chi(\det G)(\det G)^k a(T).$ So 
% (as discussed in \cite{HW} when $\chi=1$) 
we can also write $F$ as a ``Fourier series" supported on isometry classes of even integral, positive semi-definite lattices:
For $\Lambda$ an even integral lattice with $\Z$-basis $\{x,y\}$, set $c(\Lambda)=a(T_{\Lambda})$ where, relative to the given basis for $\Lambda$,
we have $\Lambda\simeq2T_{\Lambda}$.   When $\chi(-1)\not=(-1)^k$, we equip $\Lambda$ with an orientation, meaning that with $G\in GL_2(\Z)$, 
$(x\ y)G$ is a basis for the oriented lattice $\Lambda$ if and only if $\det G=1$.  Then
$$F(\tau)=\sum_{\cls\Lambda}c(\Lambda)\e^*\{\Lambda\tau\}$$
where  $\cls \Lambda$ varies over all isometry classes of (oriented) even integral, positive semi-definite lattices, and 
$$\e^*\{\Lambda\tau\}=\sum_G\exp(2\pi iTr(\,^tGT_{\Lambda}G\tau))$$
where 
% $T_{\Lambda}$ an even integral matrix representing the quadratic form on 
% $\Lambda$, 
$G$ varies over
$ O(\Lambda)\backslash GL_2(\Z)$ when $\chi(-1)=(-1)^k$, and   $G$ varies over
$O^+(\Lambda)\backslash SL_2(\Z)$ otherwise.  (Here $O(\Lambda)$ denotes the orthogonal group of $\Lambda$, and $O^+(\Lambda)=O(\Lambda)\cap SL_2(\Z)$.)

Still suppose that  $F$ is a Siegel modular form of degree 2, weight $k$, level $\stufe$ and character $\chi$. 
For $p$ prime, we define $T(p)$, $T_1(p^2)$, and $T_2(p^2)$ as follows.
Take $\delta(p)=\diag(p,p,1,1)$, $\delta_1(p^2)=(p,1,1/p,1)$, and $\delta_2(p^2)=\diag(p,p,1/p,1/p)$.
With $\Gamma=\Gamma_0(\stufe),$ we set
$$F|T(p)=p^{k-3}\sum_{\gamma}\overline\chi(\gamma) F|\delta(p)^{-1}\gamma$$
where $\gamma$ varies over $(\delta(p)\Gamma\delta(p)^{-1}\cap\Gamma)\backslash\Gamma,$
and for $j=1,2$, we set
$$F|T_j(p^2)=p^{j(k-3)}\sum_{\gamma}\overline\chi(\gamma) F|\delta_j(p^2)^{-1}\gamma$$
where $\gamma$ varies over $(\delta_j(p^2)\Gamma\delta_j(p^2)^{-1}\cap\Gamma)\backslash\Gamma.$
Note that replacing $\delta(p)$ or $\delta_j(p^2)$ by a scalar multiple of itself does not change the definition of the associated Hecke operator.
Note also that in \cite{HW}, we did not normalize $T_j(p^2)$ by $p^{j(k-3)}$, as is usually done in other texts, and has been done in the above formula for $T_1(p^2)$.
With $\widetilde T_1(p^2)=T_1(p^2)+\chi(p)p^{k-3}(p+1)$,
Theorem 6.1 of \cite{HW} gives us the following.

\begin{thm}  Let $F$ be a degree 2 Siegel modular form of weight $k$, level $\stufe$, character $\chi$, and lattice coefficients $c(\Lambda)$.  Then for any even integral lattice $\Lambda$, the $\Lambda$th coefficient of $F|T(p)$ is
$$\chi(p^2)p^{2k-3}c(\Lambda^{1/p})
+\chi(p)p^{k-2}\cdot\sum_{\{\Lambda:\Omega\}=(1,p)} c(\Omega^{1/p})
+c(\Lambda^p),$$
and the $\Lambda$th coefficient of $F|\widetilde T_1(p^2)$ is
$$\chi(p^2)p^{2k-3}\cdot\sum_{\{\Lambda:\Omega\}=(1/p,1)} c(\Omega)
+\chi(p)p^{k-2}\alpha(\Lambda;p)c(\Lambda)
+\sum_{\{\Lambda:\Omega\}=(1,p)} c(\Omega).$$
With $Q$ the quadratic form on $\Lambda$, we equip $\Lambda/p\Lambda$ with the quadratic form $\frac{1}{2}Q$, and
 $\alpha(\Lambda;p)$ is the number of isotropic lines in the quadratic space
$\Lambda/p\Lambda$.
There are $p+1$ lines in $\Lambda/p\Lambda$, and each of these
lines is generated either by $y+p\Lambda$ or by  $(x+uy)+p\Lambda$ for some $u$ with $0\le u<p$.
So with $\Lambda\simeq 2I$,
$\alpha(\Lambda;2)=1$, $\alpha(\Lambda;p)=2$ when $p\equiv1\ (4)$, and 
$\alpha(\Lambda;p)=0$ when $p\equiv3\ (4)$.
When $\Lambda\simeq 2T$ with $p|T$, $\alpha(\Lambda;p)=p+1.$

\end{thm}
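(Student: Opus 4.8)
The plan is to read off both Hecke-action formulas from Theorem 6.1 of \cite{HW}, the source quoted immediately before the statement, and then to reconcile the normalization conventions and evaluate the combinatorial quantity $\alpha(\Lambda;p)$. For $T(p)$ the definition $F|T(p)=p^{k-3}\sum_\gamma\overline\chi(\gamma)F|\delta(p)^{-1}\gamma$ uses the same normalization in both papers, so the $\Lambda$th coefficient is inherited verbatim: the three summands correspond, respectively, to the scaled lattice $\Lambda^{1/p}$, to the $p+1$ index-$p$ sublattices $\Omega$ with $\{\Lambda:\Omega\}=(1,p)$ (each rescaled to $\Omega^{1/p}$), and to the scaled-up lattice $\Lambda^p$. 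I would simply quote these three contributions, with their coefficients $\chi(p^2)p^{2k-3}$, $\chi(p)p^{k-2}$ and $1$, from \cite{HW}.

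For $\widetilde T_1(p^2)$ the bookkeeping is the only real work. Since \cite{HW} does not include the factor $p^{j(k-3)}$ in its definition of $T_j(p^2)$, I would first multiply its formula for the $\Lambda$th coefficient of $T_1(p^2)$ by $p^{k-3}$ to match the present normalization. The operator $\widetilde T_1(p^2)=T_1(p^2)+\chi(p)p^{k-3}(p+1)$ then adds $\chi(p)p^{k-3}(p+1)c(\Lambda)$ to the $\Lambda$th coefficient, since adding a scalar multiple of $F$ shifts every coefficient by that scalar. The purpose of passing to $\widetilde T_1(p^2)$ is precisely that the diagonal contribution inherited from \cite{HW} combines with this shift to produce the single clean coefficient $\chi(p)p^{k-2}\alpha(\Lambda;p)$, while the neighbor-lattice sums over $\{\Lambda:\Omega\}=(1/p,1)$ and $\{\Lambda:\Omega\}=(1,p)$ are left untouched. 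I would confirm this by comparing the normalized \cite{HW} diagonal term against $\chi(p)p^{k-2}\alpha(\Lambda;p)-\chi(p)p^{k-3}(p+1)$, tracking the powers of $p$ carefully.

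The self-contained part is the evaluation of $\alpha(\Lambda;p)$, the number of isotropic lines in $\Lambda/p\Lambda$ equipped with $\tfrac12 Q$. I would fix a basis $\{x,y\}$ and use that the $p+1$ lines are $\spn(y+p\Lambda)$ together with $\spn\bigl((x+uy)+p\Lambda\bigr)$ for $0\le u<p$. When $\Lambda\simeq 2I$ the form $\tfrac12 Q$ is $a^2+b^2$ on $(a,b)\in(\Z/p)^2$, so $\spn(y)$ is anisotropic while $\spn(x+uy)$ is isotropic exactly when $1+u^2\equiv 0\ (p)$; counting the roots of $u^2\equiv-1$ gives $\alpha=2$ for $p\equiv1\ (4)$ and $\alpha=0$ for $p\equiv3\ (4)$, whereas for $p=2$ only $\spn(x+y)$ is isotropic, so $\alpha=1$. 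When instead $\Lambda\simeq 2T$ with $p\mid T$, the form $\tfrac12 Q$ vanishes identically modulo $p$, so all $p+1$ lines are isotropic and $\alpha(\Lambda;p)=p+1$.

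The main obstacle I anticipate is not conceptual but the normalization reconciliation: because \cite{HW} scales $T_j(p^2)$ differently and records a different diagonal term, I must keep exact track of the exponents of $p$ and the values of $\chi$ to certify that the shift $\chi(p)p^{k-3}(p+1)$ yields exactly $\chi(p)p^{k-2}\alpha(\Lambda;p)c(\Lambda)$, since a single sign or exponent slip would corrupt every downstream relation in the paper. A genuinely from-scratch derivation would instead require decomposing the double cosets $\delta(p)\Gamma\delta(p)^{-1}\cap\Gamma\backslash\Gamma$ and $\delta_1(p^2)\Gamma\delta_1(p^2)^{-1}\cap\Gamma\backslash\Gamma$ into coset representatives and translating their action on the Fourier expansion into sums over neighbor lattices; but that is precisely the content of \cite{HW}, so I would import the neighbor-lattice translations (the sums over invariant-factor types and the scalings $\Lambda^{1/p}$, $\Lambda^p$) directly and reserve new effort only for the shift identity and the count of isotropic lines.
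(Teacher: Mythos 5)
Your proposal is correct and matches the paper's treatment: the paper gives no independent proof of this statement, but simply imports Theorem 6.1 of \cite{HW}, adjusts for the normalization factor $p^{j(k-3)}$ and the shift defining $\widetilde T_1(p^2)$ (exactly the bookkeeping you describe), and evaluates $\alpha(\Lambda;p)$ by the same isotropic-line count you carry out. Your computation of $\alpha(\Lambda;p)$ in the cases $\Lambda\simeq 2I$ and $p\mid T$ agrees with the paper's.
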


Note that with $p$ a prime and $m\in\Z_+$ so that $p\nmid m$, for any even integral rank 2 lattice $\Lambda$ we have $\alpha(\Lambda;p)=\alpha(\Lambda^m;p)$ since scaling by $m$ does not change whether a line is  isotropic in $\Lambda/p\Lambda$.

\bigskip

\section{Proof of Theorem 1.1}

The next lemma is pivotal in our proof of Theorem 1.1; when this lemma generalizes, we can generalize this theorem (as seen in Theorem 1.2).

\begin{lem}  Suppose that $F$ is a degree 2 Siegel modular form of weight $k$, level $\stufe$, character $\chi$, and lattice coefficients $c(\Lambda)$.
With $\Delta\simeq 2I$, $p$ prime and $m\in\Z_+$ so that $p\nmid m$, we have
$$\sum_{\{\Delta:\Omega\}=(1/p,1)}c(\Omega^{pm})=\sum_{\{\Delta:\Omega\}=(1,p)}c(\Omega^{m/p})
=\eta(p) c(\Delta^m)$$
where, as in Theorem 1.1,
$$\eta(p)=\begin{cases} 1+\chi(-1)(-1)^k&\text{if $p\equiv1\ (4)$,}\\
0&\text{if $p\equiv 3\ (4)$,}\\
1&\text{if $p=2.$}
\end{cases}$$
% In particular, $\eta(p)$ is only dependent on $p$.
\end{lem}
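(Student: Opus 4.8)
The plan is to evaluate each sum explicitly by parametrizing the lattices involved, and to note that after scaling the two families coincide term by term, so the first equality is automatic and only a counting/integrality argument remains. Writing $\Delta=\Z^2$ with Gram matrix $2I$, the $p+1$ sublattices $\Omega$ with $\{\Delta:\Omega\}=(1,p)$ correspond to the lines of $\Delta/p\Delta\cong\F_p^2$: the line through $(1,u)+p\Delta$ $(0\le u<p)$ gives $\Omega=\Z(1,u)+\Z(0,p)$ with Gram matrix $2\begin{pmatrix}1+u^2&up\\up&p^2\end{pmatrix}$, so that $\Omega^{m/p}$ has Gram matrix $2m\begin{pmatrix}(1+u^2)/p&u\\u&p\end{pmatrix}$, while the line through $(0,1)$ yields $2m\begin{pmatrix}p&0\\0&1/p\end{pmatrix}$ after scaling. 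The $p+1$ superlattices $\Omega$ with $\{\Delta:\Omega\}=(1/p,1)$ are likewise indexed by lines, being spanned by $\frac1p(1,u)$ and $(0,1)$ (resp.\ by $(1,0)$ and $\frac1p(0,1)$); a direct computation shows that $\Omega^{pm}$ has exactly the same Gram matrices $2m\begin{pmatrix}(1+u^2)/p&u\\u&p\end{pmatrix}$ (resp.\ $2m\begin{pmatrix}p&0\\0&1/p\end{pmatrix}$). Hence the two sums run over lattices with identical Gram matrices, and the first equality holds term by term.

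Next I would decide which summands survive. Since $c(\Lambda)=a(T_\Lambda)=0$ unless $\Lambda$ is even integral, and $p\nmid m$, the term $2m\begin{pmatrix}(1+u^2)/p&u\\u&p\end{pmatrix}$ contributes only when $p\mid 1+u^2$, i.e.\ $u^2\equiv-1\ (p)$, whereas the $(0,1)$-term never contributes. The number of such $u\in\{0,\dots,p-1\}$ is $2$ when $p\equiv1\ (4)$, $0$ when $p\equiv3\ (4)$, and $1$ when $p=2$. For each surviving $u$ the binary quadratic form $\begin{pmatrix}(1+u^2)/p&u\\u&p\end{pmatrix}$ has discriminant $-4$, and since the class number $h(-4)=1$ it is equivalent to the principal form $I$; therefore $\Omega^{m/p}\simeq\Delta^m$ and $c(\Omega^{m/p})=\pm c(\Delta^m)$.

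It remains to assemble the count, and this orientation bookkeeping is the only delicate point. When $\chi(-1)=(-1)^k$ the coefficients are $GL_2(\Z)$-invariant, every surviving term equals $c(\Delta^m)$, and the sum is the number of solutions of $u^2\equiv-1\ (p)$ times $c(\Delta^m)$ — which is exactly $\eta(p)c(\Delta^m)$ with $\eta(p)=1+\chi(-1)(-1)^k$. When $\chi(-1)\ne(-1)^k$ the oriented lattice $\Delta^m\simeq2mI$ admits the orientation-reversing automorphism $\diag(1,-1)$, so the transformation law $a({}^tGTG)=\chi(\det G)(\det G)^k a(T)$ applied with $G=\diag(1,-1)$ forces $c(\Delta^m)=a(mI)=0$; each surviving term is then $\pm c(\Delta^m)=0$ and the sum vanishes, matching $\eta(p)c(\Delta^m)=0$ (for $p\equiv1\ (4)$ one has $\eta(p)=0$, and for $p=2$ one uses $\eta(2)=1$ together with $c(\Delta^m)=0$). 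The crux is thus verifying that the raw count of even-integral scaled lattices equals $\eta(p)$ in the unoriented case, the oriented case being rescued by the vanishing of $c(\Delta^m)$.
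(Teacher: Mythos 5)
Your proof is correct and follows the paper's strategy in its essentials: parametrize the index-$p$ sublattices (equivalently, the superlattices, matched term by term under $\Omega\mapsto p\Omega$), observe that even integrality of $\Omega^{m/p}$ forces $u^2\equiv-1\ (p)$, count the solutions, and use the class-number-one fact (the paper cites Newman) to identify each surviving lattice with $\Delta^m$ up to a $GL_2(\Z)$-equivalence. The one genuine difference is how the factor $1+\chi(-1)(-1)^k$ emerges. The paper pairs the two roots $u$ and $-u$: if $G$ diagonalizes the form for $u$, then $\diag(-1,1)G$ does so for $-u$ with opposite determinant, so the pair contributes $(1+\chi(-1)(-1)^k)c(\Delta^m)$; this pairing is spelled out in the proof of Proposition 3.3 and is reused there. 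You instead split on whether $\chi(-1)=(-1)^k$: in the unoriented case the raw count $2$ (resp.\ $0$, $1$) already equals $\eta(p)$, and in the oriented case you note that $G=\diag(1,-1)$ fixes $mI$ and forces $c(\Delta^m)=a(mI)=0$, so both sides vanish. That observation is valid and slightly slicker here, but note that it would not substitute for the paper's $u\leftrightarrow-u$ pairing in Proposition 3.3, where the lattices being compared are not all isometric to a diagonally-stabilized form; so the paper's bookkeeping is the one that generalizes within the article.
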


\begin{proof}  
Suppose that $\{\Delta:\Omega\}=(1/p,1)$.  Then $\{\Delta:p\Omega\}=(1,p)$; also, with $T$ a matrix so that $\Omega^{m/p}\simeq \frac{m}{p}T$, we have
$p\Omega^{m/p}\simeq pm T$.  This proves that 
$$\sum_{\{\Delta:\Omega\}=(1/p,1)}c(\Omega^{pm})=\sum_{\{\Delta:\Omega\}=(1,p)}c(\Omega^{m/p}).$$

Let $\{x,y\}$ be a basis for $\Delta$ relative to which $\Delta\simeq 2I$, 
and suppose that $\{\Delta:\Omega\}=(1,p)$.
Thus $\Omega=\Z(x+uy)\oplus\Z py$ for $0\le u<p$ or $\Omega=\Z px\oplus\Z y$.
Hence $\Omega^{m/p}$ is even integral if and only if $\Omega=\Z(x+u y)\oplus\Z py$
with $u^2\equiv -1\ (p).$
If $p\equiv 3\ (4)$, there are no such $u$.
Suppose that $p\equiv 1\ (4)$, and fix $u$ so that $u^2\equiv-1\ (p)$.
Set $\Omega_u=\Z(x+u y)\oplus\Z py$ and $\Omega_{-u}=\Z(x-u y)\oplus\Z py$.  Then $\Omega_u^{1/p}$ and $\Omega_{-u}^{1/p}$ are integral with determinant 1.
Thus by Exercise 5 p. 77 of \cite{N}, there is some $G\in GL_2(\Z)$ so that $^tGTG=I$.  Therefore $c(\Omega_u^{m/p})=\chi(\det G)(\det G)^kc(\Delta^m).$
When $p=2$, $\Omega^{m/2}$ is even integral only for $\Omega_1=\Z(x+y)\oplus \Z 2y\simeq2\begin{pmatrix}1&1\\1&2\end{pmatrix}$.  Since $\,^tG\begin{pmatrix}1&1\\1&2\end{pmatrix}G=I$
for $G=\begin{pmatrix}1&-1\\0&1\end{pmatrix}$, we have
$c(\Omega_1^{1/2})=c(\Delta).$
Thus when $p=2$, the sum on $\Omega$ is
$c(\Omega^{m/2})=c(\Delta^m).$
\end{proof}

In the next proposition we use Lemma 3.1 to establish some very useful identities.

\begin{prop}  Suppose that $F$ is a degree 2 Siegel modular form of weight $k$, level $\stufe$, character $\chi$, and
and lattice coefficients $c(\Lambda)$.
Also suppose that 
$F|T(p)=\lambda(p)F$ and $F|\widetilde T_1(p^2)=\widetilde\lambda_1(p^2)F$.
Set $\eta(1)=0$, $\kappa(1)=1$.
With $\Delta\simeq 2I$ and $m\in\Z_+$ so that $p\nmid m$, for  $r\ge 1$ 
we inductively define $\eta(p^r)$ and $\kappa(p^r)$ as follows:
$\eta(p)$ is as in Proposition 3.1, $\kappa(p)=\lambda(p)-\chi(p)p^{k-2}\eta(p)$,
and for $r\ge 2$, 
$$\eta(p^r)=\widetilde\lambda_1(p^2)\kappa(p^{r-2})
-\chi(p^2)p^{2k-3}\eta(p^{r-2})-\chi(p)p^{k-2}\alpha(\Delta^{p^{r-2}};p)\kappa(p^{r-2})$$
and
$$\kappa(p^r)=\lambda(p)\kappa(p^{r-1})
-\chi(p^2)p^{2k-3}\kappa(p^{r-2})-\chi(p)p^{k-2}\eta(p^r).$$
Then we have
\begin{align}
\sum_{\{\Delta:\Omega\}=(1/p,1)} c(\Omega^{p^r m})
=\sum_{\{\Delta:\Omega\}=(1,p)} c(\Omega^{p^{r-2}m})
=\eta(p^{r}) c(\Delta^m)
\end{align}
and
\begin{align}
c(\Delta^{p^{r}m})=\kappa(p^{r}) c(\Delta^m).
\end{align}
\end{prop}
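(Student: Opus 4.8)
The plan is to prove (1) and (2) simultaneously by induction on $r$, using the two eigenvalue identities of Theorem 2.1 as the engine and lattice scaling as the bookkeeping device. Throughout I abbreviate $C(r)=c(\Delta^{p^rm})$ and $S(r)=\sum_{\{\Delta:\Omega\}=(1,p)}c(\Omega^{p^{r-2}m})$, so that (1) asserts $S(r)=\eta(p^r)c(\Delta^m)$ and (2) asserts $C(r)=\kappa(p^r)c(\Delta^m)$. The first equality in (1), relating the $(1/p,1)$-sum to the $(1,p)$-sum, holds for every $r$ by the scaling bijection $\Omega\mapsto p\Omega$ used in the proof of Lemma 3.1 (which sends an overlattice with $\{\Delta:\Omega\}=(1/p,1)$ to a sublattice with $\{\Delta:p\Omega\}=(1,p)$ and replaces the exponent $p^rm$ by $p^{r-2}m$); so it suffices to track the single sum $S(r)$.

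For the base cases, recall $\eta(1)=0$ and $\kappa(1)=1$. At $r=0$ the $(1/p,1)$-sum equals, by scaling, $\sum_{\{\Delta:\Omega\}=(1,p)}c(\Omega^{m/p^2})$, and for $p\nmid m$ the lattice $\Omega^{m/p^2}$ fails to be even integral, so every term vanishes and the sum is $0=\eta(1)c(\Delta^m)$; while $C(0)=c(\Delta^m)=\kappa(1)c(\Delta^m)$ is immediate. The $r=1$ instance of (1) is exactly Lemma 3.1. For (2) at $r=1$, I apply the $T(p)$-identity of Theorem 2.1 at $\Lambda=\Delta^m$: the term $c((\Delta^m)^{1/p})=c(\Delta^{m/p})$ vanishes since $\Delta^{m/p}\simeq\frac{2m}{p}I$ fails to be even integral when $p\nmid m$, the middle sum rescales by $m$ to $\sum_{\{\Delta:\Omega\}=(1,p)}c(\Omega^{m/p})=\eta(p)c(\Delta^m)$ by Lemma 3.1, and the last term is $C(1)$; solving gives $C(1)=(\lambda(p)-\chi(p)p^{k-2}\eta(p))c(\Delta^m)=\kappa(p)c(\Delta^m)$.

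For the inductive step with $r\ge2$, assume (1) and (2) for all smaller indices. First I apply the $\widetilde T_1(p^2)$-identity of Theorem 2.1 at $\Lambda=\Delta^{p^{r-2}m}$ and rescale its three sums by the scalar $p^{r-2}m$, which preserves invariant factors. The $(1/p,1)$-sum becomes $\sum_{\{\Delta:\Omega\}=(1/p,1)}c(\Omega^{p^{r-2}m})$, which is the $(1/p,1)$-form of (1) at index $r-2$, hence $\eta(p^{r-2})c(\Delta^m)$; the $(1,p)$-sum becomes exactly $S(r)$; and the diagonal term uses $\alpha(\Delta^{p^{r-2}m};p)=\alpha(\Delta^{p^{r-2}};p)$ (scaling by $m$ coprime to $p$ preserves isotropic lines) together with $c(\Delta^{p^{r-2}m})=\kappa(p^{r-2})c(\Delta^m)$. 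Solving the resulting identity for $S(r)$ reproduces precisely the defining formula for $\eta(p^r)$, establishing (1) at index $r$. Next I apply the $T(p)$-identity at $\Lambda=\Delta^{p^{r-1}m}$: the first term is $\chi(p^2)p^{2k-3}c(\Delta^{p^{r-2}m})=\chi(p^2)p^{2k-3}\kappa(p^{r-2})c(\Delta^m)$, the middle sum rescales by $p^{r-1}m$ to $S(r)=\eta(p^r)c(\Delta^m)$ just obtained, and the last term is $C(r)$; since the eigenvalue equation reads $\lambda(p)C(r-1)$ on the left, solving for $C(r)$ reproduces the defining formula for $\kappa(p^r)$, establishing (2) at index $r$.

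The genuine work lies entirely in the bookkeeping of the scaling maps: one must confirm that scaling the ambient lattice $\Delta$ by $p^{r-2}m$ or $p^{r-1}m$ carries the conditions $\{\Lambda:\Omega\}=(1,p)$ and $(1/p,1)$ back to the corresponding conditions on sublattices and overlattices of $\Delta$ while shifting the scaling exponents exactly as claimed, and that every lattice whose scaled Gram matrix fails to be even integral (such as $\Delta^{m/p}$ and the $r=0$ overlattices) contributes nothing. Once these identifications are in place, the two eigenvalue equations collapse to the very recursions defining $\eta(p^r)$ and $\kappa(p^r)$, and no further algebra remains.
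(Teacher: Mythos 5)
Your proposal is correct and follows essentially the same route as the paper: the same base cases $r=0$ (vanishing of the non-integral scaled lattices) and $r=1$ (Lemma 3.1 plus the $T(p)$-formula), and the same inductive step applying the $\widetilde T_1(p^2)$-identity at $\Delta^{p^{r-2}m}$ and then the $T(p)$-identity at $\Delta^{p^{r-1}m}$ so that the recursions defining $\eta(p^r)$ and $\kappa(p^r)$ drop out. The only cosmetic difference is that you fold the $p=2$ check at $r=0$ into a single sentence where the paper treats it separately via the end of the proof of Lemma 3.1.
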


\begin{proof} Recall that the value of $\alpha(\Delta;p)$ is computed after Theorem 2.1; note that for $r\ge1$, $\alpha(\Delta^{p^r};p)=p+1$ as then $\Delta^{p^r}/p\Delta^{p^r}$ is totally isotropic and contains $p+1$ lines.
Also, note that the first equality in Equation (1) is easily verified by replacing $\Omega$ by $p\Omega$.  We now compute $\eta(p^r)$ and $\kappa(p^r)$.

(Case $r=0$:)  With $\kappa(1)=1$, it is clear that $c(\Delta)=\kappa(1)c(\Delta)$.
 So suppose that we have $\{\Delta:\Omega\}=(1,p)$.  Then $\disc\Omega^{m/p^2}=4m^2/p^2$.  Hence when $p\not=2$, $\Omega^{m/p}$ cannot be integral, so $c(\Omega^{m/p})=0$.  
When $p=2$, we see from the discussion at the end of the proof of Lemma 3.1 that $\Omega^{m/4}$ is not even integral for any $\Omega$ with 
$\{\Delta:\Omega\}=(1,2).$
Thus Equation (1) holds with $\eta(1)=0$.

(Case $r=1$:)
In Lemma 3.1 we showed that 
Equation (1) holds with $\eta(p)$ as defined therein.
We know that $c(\Delta^{m/p})=0$ since $\Delta^{m/p}$ is not even integral, and so
by Theorem 2.1 and the above conclusion we have
$$\kappa(p)c(\Delta^m)\lambda(p)c(\Delta^m)-\chi(p)p^{k-2}\eta(p)c(\Delta^m).$$

(Induction step:)  Suppose that $r\ge 2$ and that the proposition holds for all $\ell$ with $0\le \ell<r$.  First, from Theorem 2.1 and the induction hypothesis
we have
\begin{align*}
\sum_{\{\Delta:\Omega\}=(1,p)}c(\Omega^{p^{r-2}m})
&=(\widetilde\lambda_1(p^2)\kappa(p^{r-2})-
\chi(p^2)p^{2k-3}\eta(p^{r-2}))c(\Delta^m)\\
&\quad-\chi(p)p^{k-2}\alpha(\Delta^{p^{r-2}};p)\kappa(p^{r-2})c(\Delta^m)\\
&=\eta(p^r)c(\Delta^m).
\end{align*}
Hence we also have
\begin{align*}
c(\Delta^{p^rm})
&=(\lambda(p)\kappa(p^{r-1})-\chi(p^2)p^{2k-3}\kappa(p^{r-2})
-\chi(p)p^{k-2}\eta(p^r))c(\Delta^m)\\
&=\kappa(p^r)c(\Delta^m).
\end{align*}
Thus induction on $r$ proves the proposition.
\end{proof}

We also have the following helpful result.

\begin{prop}  Suppose that $F$ is a degree 2 Siegel modular form of weight $k$, level $\stufe$, character $\chi$, and lattice coefficients $c(\Lambda)$; recall that $c(\Lambda)=a(T_{\Lambda})$ where $\Lambda\simeq2T_{\Lambda}$.
Fix a prime $p$ and $r\ge 1$; take
 $\Delta\simeq 2I$ relative to a $\Z$-basis $\{x,y\}$.  
Set $\epsilon=1+\chi(-1)(-1)^k$.
Then with $\eta(p)$ as defined in Lemma 3.1 and $\eta(p^{r+1})$ as defined in Proposition 3.2, we have
\begin{align*}
&\eta(p)a(p^rI)-\eta(p^{r+1})a(I)\\
&\quad =
-\epsilon  a\begin{pmatrix}p^{r-1}m\\&p^{r+1}m\end{pmatrix}
-\epsilon \sum_{\substack{1\le u<p/2\\ u^2\not\equiv-1\,(p)}}
a\left(p^rm\begin{pmatrix}(1+u^2)/p&u\\u&p\end{pmatrix}\right).
\end{align*}
\end{prop}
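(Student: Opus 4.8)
The plan is to evaluate $\sum_{\{\Delta:\Omega\}=(1,p)}c(\Omega^{p^{r-1}m})$ in two ways, where $m$ is the positive integer prime to $p$ carried over from Proposition 3.2. On the one hand, replacing $r$ by $r+1$ in Equation~(1) of Proposition 3.2 gives
$$\sum_{\{\Delta:\Omega\}=(1,p)}c(\Omega^{p^{r-1}m})=\eta(p^{r+1})c(\Delta^m)=\eta(p^{r+1})a(mI).$$
On the other hand, I would expand the sum over the $p+1$ sublattices explicitly. It then suffices to show that this sum also equals
$$\eta(p)a(p^rmI)+\epsilon\,a\begin{pmatrix}p^{r-1}m\\&p^{r+1}m\end{pmatrix}+\epsilon\sum_{\substack{1\le u<p/2\\u^2\not\equiv-1\,(p)}}a\left(p^rm\begin{pmatrix}(1+u^2)/p&u\\u&p\end{pmatrix}\right),$$
since rearranging this equality against the previous one is exactly the asserted identity.

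As in the proof of Lemma 3.1, the sublattices $\Omega$ with $\{\Delta:\Omega\}=(1,p)$ are $\Omega_u=\Z(x+uy)\oplus\Z py$ for $0\le u<p$, together with $\Z px\oplus\Z y$. Computing Gram matrices relative to these bases, I find $\Omega_u\simeq 2\left(\begin{smallmatrix}1+u^2&up\\up&p^2\end{smallmatrix}\right)$, so that $c(\Omega_u^{p^{r-1}m})=a\!\left(p^rm\left(\begin{smallmatrix}(1+u^2)/p&u\\u&p\end{smallmatrix}\right)\right)$, while $\Z px\oplus\Z y\simeq 2\left(\begin{smallmatrix}p^2&0\\0&1\end{smallmatrix}\right)$ contributes $a\!\left(\begin{smallmatrix}p^{r+1}m&\\&p^{r-1}m\end{smallmatrix}\right)$ and $\Omega_0=\Z x\oplus\Z py$ contributes $a\!\left(\begin{smallmatrix}p^{r-1}m&\\&p^{r+1}m\end{smallmatrix}\right)$. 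I would then sort the $\Omega_u$ according to whether $u^2\equiv-1\pmod p$.

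For the $u$ with $u^2\equiv-1$, the matrix $\left(\begin{smallmatrix}(1+u^2)/p&u\\u&p\end{smallmatrix}\right)$ is integral of determinant $1$, so $\Omega_u\simeq\Delta^p$ via precisely the maps $G_u\in GL_2(\Z)$ used in Lemma 3.1. Since the construction of $G_u$ does not involve the scaling factor, the same computation of signs that gave $\sum_{u^2\equiv-1}\chi(\det G_u)(\det G_u)^k=\eta(p)$ in Lemma 3.1 yields here $\sum_{u^2\equiv-1}c(\Omega_u^{p^{r-1}m})=\eta(p)c(\Delta^{p^rm})=\eta(p)a(p^rmI)$, accounting for the first term.

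It remains to pair up the surviving lattices. The two ``diagonal'' sublattices $\Omega_0$ and $\Z px\oplus\Z y$ are interchanged by the isometry $x\mapsto y$, $y\mapsto x$ of $\Delta$, and the lattices $\Omega_u$ and $\Omega_{p-u}$ (for $1\le u<p/2$ with $u^2\not\equiv-1$) are interchanged by the isometry $x\mapsto-x$, $y\mapsto y$; both isometries have determinant $-1$, so by the transformation law $a(\,^tGTG)=\chi(\det G)(\det G)^k a(T)$ each pair collapses to $(1+\chi(-1)(-1)^k)=\epsilon$ times a single coefficient. This produces exactly the remaining two terms. The main obstacle is this orientation bookkeeping: one must verify that each of these determinant-$(-1)$ isometries really carries the factor $\chi(-1)(-1)^k$ (which requires, for the $u\leftrightarrow p-u$ pairing, composing the reflection with a determinant-$(+1)$ change of basis between $\{x-uy,py\}$ and $\{x+(p-u)y,py\}$), so that the split reconciles uniformly the two cases $\chi(-1)=(-1)^k$ (where $\epsilon=2$) and $\chi(-1)\ne(-1)^k$ (where $\epsilon=0$).
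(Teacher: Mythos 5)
Your proposal is correct and follows essentially the same route as the paper: it starts from Equation (1) of Proposition 3.2 with $r$ replaced by $r+1$, enumerates the $p+1$ sublattices $\Omega$ with $\{\Delta:\Omega\}=(1,p)$, identifies the $u^2\equiv-1$ terms with $\eta(p)a(p^rmI)$ via Lemma 3.1's isometries, and pairs the remaining lattices by determinant $-1$ isometries to produce the factor $\epsilon$. Your explicit attention to the orientation bookkeeping (the determinant-$(+1)$ change of basis between $\{x-uy,py\}$ and $\{x+(p-u)y,py\}$) is a welcome clarification of a step the paper treats tersely, and you carry the scaling by $m$ more consistently than the paper's own write-up.
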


\begin{proof}  
By Proposition 3.2,
$\eta(p^{r+1})c(\Delta)=\sum_{\{\Delta:\Omega\}=(1,p)}c(\Omega^{p^{r-1}}).$
With $\Omega$ so that $\{\Delta:\Omega\}=(1,p)$, we either have $\Omega=\Z(x+uy)\oplus\Z py$ for $0\le u< p$, or $\Omega=\Z px\oplus\Z y$.  
Then for $u\not=0$, we have
$\Omega_u=\Z(x+uy)\oplus\Z py\simeq 2p^{r+1}\begin{pmatrix}((1+u^2)/p&u\\u&p\end{pmatrix};$ from our above discussion on Fourier coefficients of a Siegel modular form $F$, we have 
$c(\Omega_u^{1/p})=\chi(-1)(-1)^kc(\Omega_{-u}^{1/p}).$
Similarly, 
$$c((\Z px\oplus\Z y)^{1/p})=\chi(-1)(-1)^kc((\Z x\oplus\Z py)^{1/p}).$$
Further, if $p$ is odd and $u^2\equiv -1\ (p)$, then by Exercise 5 p. 77 of \cite{N},
there is some $G\in GL_2(\Z)$ so that
$$^tG\begin{pmatrix}(1+u^2)/p&u\\u&p\end{pmatrix}G=I;$$
hence with $G'=\diag(-1,1)G$, we get
$$^tG'\begin{pmatrix}(1+u^2)/p&-u\\-u&p\end{pmatrix}G'=I,$$
and thus
$c(\Omega_u^{1/p})+c(\Omega_{-u}^{1/p})
=(1+\chi(-1)(-1)^k)c(\Delta^{p^r}).$
Similarly, when $p=2$, $\Omega_1\simeq 2^{r+2}m\begin{pmatrix}1&1\\1&2\end{pmatrix},$
which can be diagonalized using the matrix 
$G=\begin{pmatrix}1&-1\\0&1\end{pmatrix},$  
and so $c(\Omega_1^{1/2})=c(\Delta^{p^r}).$
Using the definition of $\eta(p)$, the proposition now follows.
\end{proof}

Theorem 1.1 is now easy to prove.  Take $\Delta\simeq 2I$; recall that
$c(\Delta^{p^rm})=a(p^rmI).$
The first claim of (a) follows immediately from Theorem 2.1 and Lemma 3.1.
To prove the second claim in (a), we first use Theorem 2.1 to get
\begin{align}
\widetilde\lambda_1(p^2)c(\Delta^m)
=\chi(p)p^{k-2}\alpha(\Delta;p)c(\Delta^m)
+\sum_{\{\Delta:\Omega\}=(1,p)}c(\Omega)
\end{align}
and
\begin{align}
\lambda(p)c(\Delta^{pm})
&= \chi(p^2)p^{2k-3}c(\Delta)
%\\&\quad 
+\chi(p)p^{k-2}\sum_{\{\Delta:\Omega\}=(1,p)}c(\Omega)
+c(\Delta^{p^2}).
\end{align}
Solving Equation (4) for the sum on $\Omega$ and substituting into 
$\chi(p)p^{k-2}\cdot$Equation (3) yields the second claim in (a).

To prove (b), we first use Theorem 2.1 and Proposition 3.2 to obtain
\begin{align*}
a({p^{r+1}}I)
&=\lambda(p)a({p^r}I)-\chi(p^2)p^{2k-3}a({p^{r-1}}I)\\
&\quad -\chi(p)p^{k-2}\eta(p^{r+1})a(I).
\end{align*}
Next we multiply this equation by $a(mI)$, use Theorem 1.1(a) to substitute for $\lambda(p)a(mI)$, and use  Proposition 3.3 to substitute for 
$\eta(p)a(p^rI)-\eta(p^{r-1}I)a(I)$; (b) now immediately follows.

For (c), suppose that $n=p_1^{e_1}\cdots p_t^{e_t}$ where $p_1,\ldots,p_t$ are distinct primes so that $F$ is an eigenform for $T(p_i)$ and $\widetilde T_1(p_i^2)$
($1\le i\le t$).
For any $m'\in\Z_+$ with $(n,m')=1$,
repeated applications of Proposition 3.2 gives us
$$a(m'nI)=\kappa(p_1^{e_1})\cdots \kappa(p_t^{e_t})a(m'I).$$
Thus (taking $m'=m$) we have $a(mnI)=0$ if $a(mI)=0$.
Further (taking $m'=1$), we have
$$a(nI)=\kappa(p_1^{e_1})\cdots \kappa(p_t^{e_t})a(I)$$
and hence
$a(I)a(mnI)=a(mI)\kappa(p_1^{e_1})\cdots\kappa(p_t^{e_t})a(I)=a(mI)a(nI).$

\bigskip

\section{Proof of Theorem 1.2}

As previously noted, the key to proving Theorem 1.1 is Lemma 3.1.  We can extend this lemma to some extent, as follows.

\begin{lem}  
 Suppose that $F$ is a degree 2 Siegel modular form of weight $k$, level $\stufe$, and character $\chi$, and let $c(\Lambda)$ denote the $\Lambda$th coefficient of $F$.  
Suppose that $p$ is an odd prime and $\Delta\simeq 2\begin{pmatrix}1\\&p\end{pmatrix}.$ 
For $m\in\Z_+$ with $p\nmid m$, we have
$$\sum_{\{\Delta:\Omega\}=(1/p,1)}c(\Omega^{pm})=\chi(-1)(-1)^k c(\Delta^m).$$
For $q$ an odd prime with 
$\left(\frac{-p}{q}\right)=-1$ and $q\nmid m$, we have
$$\sum_{\{\Delta:\Omega\}=(1/q,1)}c(\Omega^{qm})=0.$$
\end{lem}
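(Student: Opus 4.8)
The plan is to mirror the structure of the proof of Lemma 3.1, working explicitly with a basis for $\Delta$ and classifying the relevant sublattices $\Omega$ by their integrality after scaling. First I would fix a $\Z$-basis $\{x,y\}$ for $\Delta$ relative to which $\Delta\simeq 2\begin{pmatrix}1\\&p\end{pmatrix}$, so that $Q(x)=2$, $Q(y)=2p$ and $x\cdot y=0$. The condition $\{\Delta:\Omega\}=(1/p,1)$ says $\Omega\supseteq\Delta$ with invariant factors $(1/p,1)$, equivalently $p\Omega\subseteq\Delta$ with $\{\Delta:p\Omega\}=(1,p)$; as in Proposition 3.2 I would replace $\Omega$ by $p\Omega$ and recast the sum over the $p+1$ sublattices $\Omega'$ with $\{\Delta:\Omega'\}=(1,p)$, namely $\Omega'=\Z(x+uy)\oplus\Z py$ for $0\le u<p$ together with $\Omega'=\Z px\oplus\Z y$. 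The key point is then to determine, for each such $\Omega'$, whether the scaled lattice becomes even integral, and if so to identify its isometry class.

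The main computation is to evaluate the Gram matrix of each candidate after scaling by $m/p$ (equivalently to test when $\Omega'^{m/p}$ is even integral) and to find the $u$ for which the resulting form is isometric to $2\begin{pmatrix}1\\&p\end{pmatrix}$, i.e.\ to $\Delta^m$. For $\Omega'=\Z(x+uy)\oplus\Z py$ the Gram matrix relative to this basis is $2\begin{pmatrix}1+pu^2 & p^2u\\ p^2u & p^3\end{pmatrix}$, whose discriminant is divisible by the right power of $p$ exactly when a congruence condition on $u$ holds; I expect the even-integral scaled lattices to be those with $u^2\equiv -1/p$ or a similar condition modulo $p$, and there the local representability is governed by whether $-p$ is a square mod $p$ (for $q=p$) or mod $q$ (for the second sum). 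For the first identity, with $q=p$ there should be exactly one relevant class, and invoking Exercise 5 p.\ 77 of \cite{N} to diagonalize it to $\Delta$ produces the factor $\chi(-1)(-1)^k$ coming from the orientation/$\det G$ relation $c(^tGTG\text{-lattice})=\chi(\det G)(\det G)^k c(\cdot)$. For the second identity, the hypothesis $\left(\frac{-p}{q}\right)=-1$ should force the congruence defining even-integral scaled lattices to have no solution, so the entire sum is empty and hence $0$.

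The hard part will be the bookkeeping of the quadratic-residue condition: I need to show cleanly that even integrality of $\Omega^{qm}$ forces a congruence of the shape $\square\equiv -p\pmod q$ (so that $\left(\frac{-p}{q}\right)=-1$ kills it), and separately that for $q=p$ exactly one of the $p+1$ sublattices survives and lands in the class of $\Delta^m$. The orientation subtlety when $\chi(-1)\neq(-1)^k$ — deciding whether paired sublattices $\Omega_u,\Omega_{-u}$ contribute with sign or cancel — must be tracked carefully, since it is precisely what yields $\chi(-1)(-1)^k$ rather than $1+\chi(-1)(-1)^k$; unlike the $\Delta\simeq 2I$ case of Lemma 3.1, here the single surviving class need not be stabilized by an orientation-reversing isometry, which is why the answer is a signed single term rather than a sum of two. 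Once the surviving classes and their isometry types are pinned down, the identities follow immediately from the Fourier-coefficient transformation law $a(^tGTG)=\chi(\det G)(\det G)^k a(T)$ recorded in Section 2.
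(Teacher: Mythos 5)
Your overall strategy is the same as the paper's: fix the basis $\{x,y\}$ with $Q(x)=2$, $Q(y)=2p$, $x\cdot y=0$, pass from $\{\Delta:\Omega\}=(1/p,1)$ to $\{\Delta:p\Omega\}=(1,p)$, enumerate the $p+1$ index-$p$ sublattices, test even integrality after scaling, and convert isometries into the factor $\chi(\det G)(\det G)^k$. Your Gram matrix $2\begin{pmatrix}1+pu^2&p^2u\\p^2u&p^3\end{pmatrix}$ for $\Z(x+uy)\oplus\Z py$ is correct, and your treatment of the second identity is essentially right: even integrality of $\Omega^{m/q}$ for $\Omega=\Z(x+uy)\oplus\Z qy$ forces $q\mid 1+pu^2$, i.e.\ $(pu)^2\equiv-p\ (q)$, which $\left(\frac{-p}{q}\right)=-1$ rules out (you should also note that $\Z qx\oplus\Z y$ scales to a non-integral lattice since $q\nmid pm$, which kills the last candidate).

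The one place your plan goes astray is the identification of the surviving sublattice when $q=p$. You say the relevant classes ``should be those with $u^2\equiv-1/p$ or a similar condition modulo $p$'' and that representability is ``governed by whether $-p$ is a square mod $p$''; the latter is not a meaningful condition, and in fact \emph{no} $\Omega_u=\Z(x+uy)\oplus\Z py$ survives, because $1+pu^2\equiv1\ (p)$ is never divisible by $p$. The unique survivor is the remaining sublattice $\Z px\oplus\Z y\simeq 2p\begin{pmatrix}p\\&1\end{pmatrix}$, which is already diagonal; no appeal to Exercise 5 of Newman is needed here. The factor $\chi(-1)(-1)^k$ arises solely because $\begin{pmatrix}p\\&1\end{pmatrix}$ is carried to $\begin{pmatrix}1\\&p\end{pmatrix}$ by the swap matrix of determinant $-1$ — consistent with your (correct) intuition that a single unpaired class contributes a signed single term rather than $1+\chi(-1)(-1)^k$. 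Since the computation you set up would expose this once carried out, the gap is one of execution rather than method, but as stated your expectation about which lattice survives, and why, is wrong.
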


\begin{proof}  Let $\{x,y\}$ be a $\Z$-basis for $\Delta$ relative to which
$\Delta\simeq \begin{pmatrix}2\\&2p\end{pmatrix}.$ 
Then the only lattice $\Omega$ so that $\{\Delta:\Omega\}=(1,p)$ and $\Omega^{m/p}$ is even integral if
$$\Omega=\Z px\oplus\Z y\simeq 2p\begin{pmatrix}p\\&1\end{pmatrix}
=2p\begin{pmatrix}0&1\\1&0\end{pmatrix}\begin{pmatrix}1\\&p\end{pmatrix} 
\begin{pmatrix}0&1\\1&0\end{pmatrix}.$$
With $\gamma=\diag\left(\begin{pmatrix}0&1\\1&0\end{pmatrix},\begin{pmatrix}0&1\\1&0\end{pmatrix}\right)$, we have $F|\gamma=\chi(-1)F$ and consequently $c\left(2m\begin{pmatrix}p\\&1\end{pmatrix} \right)
=\chi(-1)(-1)^k c\left(2m\begin{pmatrix}1\\&p\end{pmatrix} \right).$
Hence 
$$\sum_{\{\Delta:\Omega\}=(1/p,1)}c(\Omega^{pm})=
\sum_{\{\Delta:\Omega\}=(1,p)}c(\Omega^{m/p})=\chi(-1)(-1)^k c(\Delta^m).$$

With $q$ an odd prime with $\left(\frac{-p}{q}\right)=-1$ and $q\nmid m$, there is no lattice $\Omega$ so that 
$\{\Delta:\Omega\}=(1,q)$ and $\Omega^{m/q}$ is even integral, and hence
$$\sum_{\{\Delta:\Omega\}=(1/q,1)}c(\Omega^{qm})=0.$$
\end{proof}

To prove Theorem 1.2, we begin by making the following definitions.
Set $\eta(1)=0$, $\kappa(1)=1$.
For $q\in \calS$ (as defined in the statement of Theorem 1.2), define $\eta(q)$ as in Lemma 4.1, and set
$\kappa(q)=\lambda(q)-\chi(q)q^{k-2}\eta(q).$  For $r\ge 2$, we define $\eta(q^r)$ and $\kappa(q^r)$ using the inductive formulas from Proposition 3.2 (so $\eta(q^r), \kappa(q^r)$ are determined by $\eta(q)$, $\lambda(q)$ and $\widetilde\lambda_1(q^2)$).
Then mimicking the proofs of Proposition 3.2 and Theorem 1.1(c) easily yields Theorem 1.2.

\end{document}